\DeclareMathAlphabet{\mathcal}{OMS}{cmsy}{m}{n} 
\DeclareMathOperator{\id}{id}
\DeclareMathOperator{\im}{Im}
\newtheorem{lemma}{Lemma}
\newtheorem*{proposition}{Proposition}
\newtheorem*{corollary}{Corollary}
\newtheorem{question}{Question}
\begin{document}

\title{Non-semigroup gradings of associative algebras}
\author{Pasha Zusmanovich}
\address{
Department of Mathematics, University of Ostrava, Ostrava, Czech Republic
}
\email{pasha.zusmanovich@osu.cz}
\date{last revised January 3, 2017}
\thanks{arXiv:1609.03924}
\keywords{Non-semigroup grading; $\delta$-derivation}
\subjclass[2010]{16W50; 16W25}

\begin{abstract}
It is known that there are Lie algebras with non-semigroup gradings, i.e. such 
that the binary operation on the grading set is not associative. We provide a 
similar example in the class of associative algebras.
\end{abstract}

\maketitle

\section*{Introduction}

Let $A$ be a (generally, not necessarily associative) algebra, and 
$A = \bigoplus_{g \in \Gamma} A_g$ its grading over a set $(\Gamma, *)$, i.e. 
$*: \Gamma \times \Gamma \to \Gamma$ is a partial binary operation defined for 
each pair $(g,h)$ such that $A_g A_h \ne 0$, in which case  
$A_g A_h \subseteq A_{g * h}$. How the identities satisfied by the algebra $A$
are related to identities satisfied by the grading set $\Gamma$? Since the
operation $*$ on $\Gamma$ is partial, in the latter case it makes sense to speak
about the (im)possibility to complete $*$ in such a way that it will satisfy 
that or another identity, or, in more strict terms, about the (im)possibility of
embedding of $(\Gamma, *)$ into an appropriate magma\footnote{
``Magma'' means a set with an (everywhere defined) binary operation on it, 
without any additional conditions. In the older literature, the term 
``groupoid'' was used instead, but since then the latter term was taken by 
category theorists.}
$(G, \cdot)$ such that $g * h = g \cdot h$ whenever $A_g A_h \ne 0$.

It is immediate that commutativity or anticommutativity of $A$ implies that 
$\Gamma$ can be embedded into a commutative magma. Elementary manipulations 
involving homogeneous components $A_g$'s of graded Lie and associative algebras
may suggest that both Jacobi identity and associativity of the algebra $A$ are 
strongly connected with the associativity of the grading set $\Gamma$. In the 
Lie case, it was believed for a while (and even claimed in an influential paper
\cite{zass} as Theorem 1(a)) that each grading of a Lie algebra is a 
\emph{semigroup grading}, i.e. the grading set $(\Gamma, *)$ can be embedded 
into a semigroup. This is indeed so for all gradings of Lie and associative 
algebras appearing naturally (root space decompositions with respect to a Cartan
subalgebra, gradings arising from various group or Hopf algebra actions on the 
algebra, $\mathbb Z$-gradings providing connection between Lie and Jordan algebras, semigroup algebras and their twisted variants, grading by Pauli 
matrices motivated by physics, etc.). However, in \cite{elduque-1} and 
\cite{elduque-2} examples of non-semigroup gradings of Lie algebras were given. 
The aim of this note is to provide an example of a non-semigroup grading of an
associative algebra. This is done in \S \ref{sec-nonsemigr} following approach 
of \cite[\S 3]{delta}, where it was shown how non-semigroup gradings of Lie 
algebras can be constructed using $\delta$-derivations. \S \ref{sec-quest} contains some further questions.

\section{An example of a non-semigroup grading}\label{sec-nonsemigr}

In the associative case, instead of $\delta$-derivations we may consider a 
slightly more general notion of $(\delta,\gamma)$-derivations, i.e. linear maps
$D: A \to A$ on an algebra $A$ such that
$$
D(xy) = \delta D(x)y + \gamma x D(y)
$$
for any $x,y \in A$, and some fixed elements of the ground field 
$\delta, \gamma$. In the Lie case, due to anticommutativity, any such condition
implies that either $\delta = \gamma$, i.e. $D$ is a 
$\delta$-derivation, or that $D$ is an element of ``generalized centroid'', i.e.
$$
D(xy) = (\delta + \gamma)D(x)y = (\delta + \gamma)xD(y)
$$
for any $x,y\in A$, the latter condition being too restrictive to be 
interesting. (The same dichotomy holds for commutative algebras).

\begin{lemma}\label{lemma-a}
Let $A$ be a finite-dimensional algebra over an algebraically closed field $K$,
and $A = \bigoplus_{\lambda \in K} A_\lambda$ is the root space 
decomposition with respect to an $(\delta,\gamma)$-derivation of $A$. Then 
$A_\lambda A_\mu \subseteq A_{\delta\lambda + \gamma\mu}$ for any 
$\lambda, \mu \in K$.
\end{lemma}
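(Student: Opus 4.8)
The plan is to mimic the classical computation showing that, for an ordinary derivation, products of root spaces add up ($A_\lambda A_\mu \subseteq A_{\lambda+\mu}$), adjusting the bookkeeping to the twisted Leibniz rule defining a $(\delta,\gamma)$-derivation. Write $D$ for the $(\delta,\gamma)$-derivation in question and, for $\lambda \in K$, let $A_\lambda = \set{z \in A}{(D - \lambda\,\id)^n z = 0 \text{ for some } n}$ be the generalized eigenspace; since $K$ is algebraically closed and $\dim A < \infty$, these span $A$ (this is exactly the "root space decomposition" referred to in the statement). Fix $\lambda,\mu \in K$, set $\nu = \delta\lambda + \gamma\mu$, and take $x \in A_\lambda$, $y \in A_\mu$; the goal is $xy \in A_\nu$.

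First I would record, for arbitrary $u,v \in A$, the single-step identity
$$
(D - \nu\,\id)(uv) = \delta\bigl((D-\lambda\,\id)u\bigr)v + \gamma\, u\bigl((D-\mu\,\id)v\bigr),
$$
which is nothing but the defining relation $D(uv) = \delta D(u)v + \gamma u D(v)$ after substituting $D = (D-\lambda\,\id) + \lambda\,\id$ on the left factor and $D = (D-\mu\,\id) + \mu\,\id$ on the right factor and collecting the term $(\delta\lambda+\gamma\mu)uv = \nu uv$. The point is that this identity has the same shape as the original one, with $D$, its action on the left factor, and its action on the right factor replaced by $D - \nu\,\id$, $(D-\lambda\,\id)(\cdot)$, and $(D-\mu\,\id)(\cdot)$ respectively. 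Iterating it, applying $D-\nu\,\id$ to a monomial $\bigl((D-\lambda\,\id)^k x\bigr)\bigl((D-\mu\,\id)^{n-k} y\bigr)$ each time, and collecting terms by the usual induction on $n$ together with Pascal's rule $\binom{n}{k-1}+\binom{n}{k}=\binom{n+1}{k}$, gives the binomial formula
$$
(D - \nu\,\id)^n(xy) = \sum_{k=0}^{n} \binom{n}{k} \delta^{k}\gamma^{n-k}\, \bigl((D - \lambda\,\id)^{k} x\bigr)\bigl((D - \mu\,\id)^{n-k} y\bigr)
$$
for every $n \ge 0$.

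To finish, I would invoke finite-dimensionality: choose $N$ (for instance $N = \dim A$) with $(D-\lambda\,\id)^N x = 0$ and $(D-\mu\,\id)^N y = 0$. In the displayed sum with $n = 2N-1$, every index $k$ satisfies $k \ge N$ or $n-k \ge N$, so each summand vanishes; hence $(D-\nu\,\id)^{2N-1}(xy) = 0$. Finally, an element annihilated by a power of $D-\nu\,\id$ lies in $A_\nu$ — write $xy = \sum_\kappa (xy)_\kappa$ in the root space decomposition, note each $A_\kappa$ is $D$-invariant, so $(D-\nu\,\id)^{2N-1}$ kills each $(xy)_\kappa$ separately, and for $\kappa \ne \nu$ the operator $D-\nu\,\id$ is invertible on $A_\kappa$, forcing $(xy)_\kappa = 0$. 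Thus $xy \in A_\nu = A_{\delta\lambda+\gamma\mu}$ (in particular $A_\lambda A_\mu = 0$ whenever $\delta\lambda+\gamma\mu$ is not an eigenvalue of $D$).

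I do not expect a serious obstacle here: the only real content is spotting the correct single-step identity and carrying the coefficients $\binom{n}{k}\delta^k\gamma^{n-k}$ through the induction, and the one point that genuinely uses the hypotheses is the last step, where the generalized-eigenspace decomposition — hence algebraic closedness and finite-dimensionality — is needed to pass from "killed by a power of $D-\nu\,\id$" to "lies in $A_\nu$". As an alternative to the binomial computation, one can argue operator-theoretically: the multiplication map $m\colon A\otimes A\to A$ intertwines $D$ with $\delta D\otimes\id+\gamma\,\id\otimes D$, and on $A_\lambda\otimes A_\mu$ this latter operator has $\nu$ as its only generalized eigenvalue because $D\otimes\id$ and $\id\otimes D$ commute and are, respectively, $\lambda\,\id$ and $\mu\,\id$ up to a nilpotent there; applying $m$ then yields the claim directly.
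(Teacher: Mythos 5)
Your proof is correct and follows essentially the same route as the paper, which simply notes the eigenvector computation and defers the generalized-eigenvector case to the classical induction in Jacobson (Chapter III, \S 2) --- precisely the twisted binomial identity $(D-\nu\id)^n(xy)=\sum_k \binom{n}{k}\delta^k\gamma^{n-k}\bigl((D-\lambda\id)^k x\bigr)\bigl((D-\mu\id)^{n-k}y\bigr)$ that you work out in full. Your single-step identity and the vanishing for $n=2N-1$ are exactly the details the paper leaves to the reference, so there is nothing to object to.
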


Note that the algebra $A$ here and below is not assumed to be associative, or 
Lie, or to satisfy any other distinguished identity.

\begin{proof}
It is trivial to check that if $x$ and $y$ are eigenvectors of an 
$(\delta,\gamma)$-derivation of $A$, corresponding to eigenvalues $\lambda$ and
$\mu$ respectively, then the product $xy$ is an eigenvector corresponding to 
$\delta\lambda + \gamma\mu$ (or zero, if $\delta\lambda + \gamma\mu$ is not an 
eigenvalue). Then proceed by induction on the sum of multiplicities of the 
respective eigenvalues, exactly the same way as in, for example, 
\cite[Chapter III, \S 2]{jacobson}.
\end{proof}

The following is a slightly modified ``nonassociative'' analogue of the 
Lie-algebraic statement \cite[Proposition 3.1]{delta}.

\begin{proposition}
Let $A$ be a finite-dimensional algebra over an algebraically closed field, and 
$D$ an $(\delta,\gamma)$-derivation of $A$. Suppose that there are roots 
$\lambda, \mu, \eta, \theta, \xi$ (not necessarily distinct) in the root space 
decomposition of $A$ with respect to $D$ such that 
\begin{gather}
0 \ne A_\lambda A_\eta \subseteq A_\theta, \quad A_\theta A_\mu \ne 0 ,
\label{eq-cond}
\\
0 \ne A_\eta A_\mu \subseteq A_\xi, \quad A_\lambda A_\xi \ne 0 ,
\label{eq-cond1}
\end{gather}
and $(\delta^2 - \delta)\lambda \ne (\gamma^2 - \gamma)\mu$. Then the said root
space decomposition is a non-semigroup grading of $A$.
\end{proposition}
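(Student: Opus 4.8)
The plan is to pin down the partial operation $*$ on the grading set $\Gamma$ explicitly, and then to exhibit a single instance of non-associativity that no semigroup extension could accommodate. By Lemma~\ref{lemma-a}, the root space decomposition $A = \bigoplus_{\lambda \in K} A_\lambda$ is indeed a grading; moreover, whenever $A_g A_h \ne 0$ the product $A_g A_h$ is a nonzero subspace of $A_{\delta g + \gamma h}$, and since the root spaces form a direct sum, the defining property $A_g A_h \subseteq A_{g*h}$ forces $g * h = \delta g + \gamma h$. First I would use this, together with \eqref{eq-cond}--\eqref{eq-cond1}, to conclude that $\theta = \lambda * \eta = \delta\lambda + \gamma\eta$ and $\xi = \eta * \mu = \delta\eta + \gamma\mu$, and that all four expressions $\lambda*\eta$, $(\lambda*\eta)*\mu$, $\eta*\mu$, $\lambda*(\eta*\mu)$ are defined.

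Next I would recall the obstruction a semigroup grading must avoid: if $(\Gamma, *)$ embeds into a semigroup $(G, \cdot)$, then $a \cdot b = a * b$ whenever $a * b$ is defined, so whenever $a*b$, $(a*b)*c$, $b*c$, $a*(b*c)$ are all defined one gets $(a*b)*c = (a\cdot b)\cdot c = a\cdot(b\cdot c) = a*(b*c)$ in $G$, hence in $\Gamma$. It therefore suffices to find a triple for which all four products exist but this equality fails.

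Taking the triple $(\lambda, \eta, \mu)$ and substituting the values of $\theta$ and $\xi$ into $g*h = \delta g + \gamma h$, I would compute
\[
(\lambda*\eta)*\mu = \delta\theta + \gamma\mu = \delta^2\lambda + \delta\gamma\eta + \gamma\mu, \qquad
\lambda*(\eta*\mu) = \delta\lambda + \gamma\xi = \delta\lambda + \delta\gamma\eta + \gamma^2\mu,
\]
so that $(\lambda*\eta)*\mu - \lambda*(\eta*\mu) = (\delta^2 - \delta)\lambda - (\gamma^2 - \gamma)\mu \ne 0$ by the hypothesis $(\delta^2 - \delta)\lambda \ne (\gamma^2 - \gamma)\mu$. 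Since all four products are defined while $(\lambda*\eta)*\mu \ne \lambda*(\eta*\mu)$, the grading set admits no embedding into a semigroup, i.e. the decomposition is a non-semigroup grading.

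The computation is routine; the two points to be careful about are that the partial operation is genuinely forced to be $(g,h) \mapsto \delta g + \gamma h$ (this is exactly where Lemma~\ref{lemma-a} and the directness of the root space decomposition are used), and that conditions \eqref{eq-cond}--\eqref{eq-cond1} have been chosen precisely so that the two ``bracketings'' of $\lambda, \eta, \mu$ are both defined --- without which the semigroup obstruction would say nothing. I expect no real difficulty beyond this bookkeeping.
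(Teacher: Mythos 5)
Your proposal is correct and follows essentially the same route as the paper: both arguments use Lemma~\ref{lemma-a} to force $g*h=\delta g+\gamma h$ on the grading set, observe that conditions \eqref{eq-cond}--\eqref{eq-cond1} make both bracketings of $\lambda,\eta,\mu$ defined, and derive the contradiction $(\delta^2-\delta)\lambda=(\gamma^2-\gamma)\mu$ from associativity. You merely spell out the computation that the paper leaves implicit in the phrase ``By Lemma~\ref{lemma-a}, this equality is equivalent to\dots''.
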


Note that the conditions (\ref{eq-cond}) and (\ref{eq-cond1}) are somewhat 
weaker than $(A_\lambda A_\eta) A_\mu \ne 0$ and 
$A_\lambda (A_\eta A_\mu) \ne 0$, respectively.

\begin{proof}
The conditions (\ref{eq-cond}) and (\ref{eq-cond1}) ensure that both expressions
$(\lambda * \eta) * \mu$ and $\lambda * (\eta * \mu)$ are defined. If the root 
space decomposition of $A$ with respect to $D$ is a semigroup grading, then 
these two expressions are equal: 
$(\lambda * \eta) * \mu = \lambda * (\eta * \mu)$. By Lemma \ref{lemma-a}, this 
equality is equivalent to $(\delta^2 - \delta)\lambda = (\gamma^2 - \gamma)\mu$,
a contradiction.
\end{proof}

\begin{corollary}
The conclusion of Proposition holds in each of the following cases:
\begin{enumerate}
\item $\delta = \gamma \ne 0,1$, and $\lambda \ne \mu$;
\item $\delta \ne \gamma$, $\delta + \gamma \ne 1$, and $\lambda = \mu \ne 0$.
\end{enumerate}
\end{corollary}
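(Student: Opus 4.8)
The plan is to deduce both statements directly from the Proposition: in each case I will check that the single numerical hypothesis $(\delta^2-\delta)\lambda \ne (\gamma^2-\gamma)\mu$ holds automatically, while all the remaining hypotheses of the Proposition (the existence of roots $\lambda,\mu,\eta,\theta,\xi$ satisfying (\ref{eq-cond}) and (\ref{eq-cond1})) are simply inherited and require no argument. Thus the entire content of the Corollary is a short piece of field arithmetic verifying an inequality.

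For case (i), set $c = \delta^2-\delta = \delta(\delta-1)$. Since $\delta=\gamma$ one also has $\gamma^2-\gamma=c$, so the inequality to be checked is $c\lambda \ne c\mu$. The side condition $\delta \notin \{0,1\}$ says exactly that $c\ne 0$, whence $c\lambda\ne c\mu$ is equivalent to $\lambda\ne\mu$, which is assumed. Hence the hypothesis of the Proposition is met and its conclusion follows.

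For case (ii), set $\lambda=\mu$, which is nonzero by assumption; cancelling this scalar, the inequality $(\delta^2-\delta)\lambda\ne(\gamma^2-\gamma)\mu$ becomes $\delta^2-\delta\ne\gamma^2-\gamma$, i.e.\ $(\delta-\gamma)(\delta+\gamma)\ne\delta-\gamma$. Since $\delta\ne\gamma$, the factor $\delta-\gamma$ may be cancelled, reducing the condition to $\delta+\gamma\ne 1$, again one of the stated hypotheses; so the Proposition applies.

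I do not expect a genuine obstacle here. The only points requiring care are the direction of the implications — one must show that the listed conditions \emph{imply} the strict inequality of the Proposition, not conversely — and the legitimacy of the two cancellations (of $c$ in case (i), and of $\lambda$ and of $\delta-\gamma$ in case (ii)); the side conditions $\delta\ne 0,1$, resp.\ $\lambda\ne 0$ and $\delta\ne\gamma$, are precisely what guarantee that the cancelled quantities are nonzero. After these elementary checks the conclusion of the Proposition transfers verbatim to both cases.
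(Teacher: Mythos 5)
Your proposal is correct and matches the paper's intent exactly: the paper's own proof is just ``Obvious,'' and your field-arithmetic verification of $(\delta^2-\delta)\lambda \ne (\gamma^2-\gamma)\mu$ in each case is precisely the routine check being elided. Both cancellation steps are justified by the stated side conditions, so nothing is missing.
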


\begin{proof}
Obvious.
\end{proof}

Now we will provide an example of a family of associative algebras having 
$\delta$-derivations as in heading (i) of the Corollary, and hence admitting a 
non-semigroup grading. Let $V$ be a vector space over a field $K$, and 
$f_L,f_R,g_L,g_R: V \to V$ be four linear maps. Consider the vector space direct
sum 
\begin{equation*}
Ke \oplus Ka \oplus V \oplus V^\prime ,
\end{equation*}
where $Ke$ and $Ka$ are one-dimensional vector spaces spanned by elements $e$
and $a$ respectively, and $V^\prime$ is a second copy of $V$, identified with
$V$ via a nondegenerate linear map $v \mapsto v^\prime$. Define the 
multiplication on this direct sum as follows:
\begin{gather*}
e^2 = e , \quad 
a^2 = 0 , \quad 
av = f_L(v)^\prime , \quad va = f_R(v)^\prime , \quad  
av^\prime = g_L(v) , \quad v^\prime a = g_R(v) ,
\end{gather*}
where $v \in V$, and the rest of the products between basic elements are zero. 
The associativity of the so defined algebra, let us denote it as 
$A(f_L,f_R,g_L,g_R)$, is equivalent to the following conditions:
\begin{align*}
f_L \circ g_L &= g_L \circ f_L = 0 \\
f_R \circ g_R &= g_R \circ f_R = 0 \\
g_R \circ f_L &= g_L \circ f_R     \\
f_R \circ g_L &= f_L \circ g_R .
\end{align*}

\begin{lemma}\label{lemma-der}
Suppose that each of the maps $f_L,f_R,g_L,g_R$ is nonzero, and 
$(\delta,\gamma) \ne (0,0)$. Then each $(\delta,\gamma)$-derivation $D$ of the 
algebra $A(f_L,f_R,g_L,g_R)$ is of the following form:
\begin{align*}
D(e)\phantom{\prime} &= \begin{cases}
0       & \text{ if } \delta + \gamma \ne 1 \\
\beta e & \text{ if } \delta + \gamma = 1
\end{cases} 
\\
D(a)\phantom{\prime} &= \alpha a + v_a + w_a^\prime \\
D(v)\phantom{\prime} &= \varphi(v) + \psi(v)^\prime , \quad v \in V         \\
D(v^\prime) &= \widetilde{\varphi}(v) + \widetilde{\psi}(v)^\prime 
\end{align*}
where $\alpha, \beta \in K$, $v_a, w_a \in V$, 
$\varphi,\widetilde{\varphi},\psi,\widetilde{\psi}: V \to V$ are linear maps,
and the following conditions are satisfied: 
\begin{alignat*}{2}
&(\delta f_R + \gamma f_L)(v_a) &\>= 0 \\
&(\delta g_R + \gamma g_L)(w_a) &\>= 0
\end{alignat*}
and
\begin{align*}
\widetilde\varphi \circ f_L &= \gamma g_L \circ \psi                   \\
\widetilde\psi \circ f_L &= \delta\alpha f_L + \gamma f_L \circ \varphi \\
\widetilde\varphi \circ f_R &= \delta g_R \circ \psi \\
\widetilde\psi \circ f_R &= \gamma\alpha f_R + \delta f_R \circ \varphi \\
\varphi \circ g_L &= \delta\alpha g_L + \gamma g_L \circ \widetilde\psi \\
\psi \circ g_L &= \gamma f_L \circ \widetilde\varphi \\
\varphi \circ g_R &= \gamma\alpha g_R + \delta g_R \circ \widetilde\psi \\
\psi \circ g_R &= \delta f_R \circ \widetilde\varphi .
\end{align*}
\end{lemma}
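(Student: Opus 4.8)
The proof is a direct, if somewhat lengthy, computation with the multiplication table. I would start by writing an arbitrary linear map $D$ in block form with respect to the decomposition $Ke \oplus Ka \oplus V \oplus V^\prime$:
$$
D(e) = \beta_e e + \beta_a a + p + q^\prime, \qquad D(a) = \alpha_e e + \alpha a + v_a + w_a^\prime,
$$
$$
D(v) = \rho(v) e + \sigma(v) a + \varphi(v) + \psi(v)^\prime, \qquad D(v^\prime) = \widetilde\rho(v) e + \widetilde\sigma(v) a + \widetilde\varphi(v) + \widetilde\psi(v)^\prime,
$$
with $\beta_e,\beta_a,\alpha_e,\alpha\in K$, $p,q,v_a,w_a\in V$, linear functionals $\rho,\sigma,\widetilde\rho,\widetilde\sigma\colon V\to K$, and linear maps $\varphi,\psi,\widetilde\varphi,\widetilde\psi\colon V\to V$. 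One then substitutes this into $D(xy) = \delta D(x)y + \gamma x D(y)$ for every product of ``basic pieces'' from $\{e,a,V,V^\prime\}$ and reads off the resulting conditions by projecting onto the four summands. The claim to be established is that this forces $\beta_a,p,q,\alpha_e$ and all four functionals to vanish, leaves $\beta_e$ free precisely when $\delta+\gamma=1$, and reproduces exactly the eight displayed identities together with the two conditions on $v_a,w_a$.

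The computation splits naturally into a handful of steps. First, $e\cdot e = e$: since $e$ annihilates, on either side, everything but itself, one gets $D(e^2)=(\delta+\gamma)\beta_e e$, which must equal $D(e)$, whence $\beta_a=p=q=0$ and $(\delta+\gamma-1)\beta_e=0$, i.e.\ the stated form of $D(e)$. Second, the vanishing products involving $e$: from $a\cdot e=0$ one gets $\delta\alpha_e e=0$, from $e\cdot a=0$ one gets $\gamma\alpha_e e=0$, and since $(\delta,\gamma)\ne(0,0)$ this yields $\alpha_e=0$; the same trick applied to $e\cdot v=v\cdot e=0$ and $e\cdot v^\prime=v^\prime\cdot e=0$ gives $\rho=\widetilde\rho=0$. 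Third --- and this is the step that needs a little thought --- one must also kill the ``spurious'' $a$-components $\sigma$ and $\widetilde\sigma$ of $D$ on $V$ and $V^\prime$; since no product of basic pieces ever lands in $Ka$, these cannot be detected from a single product. The remedy is to use the vanishing \emph{cross} products: $v\cdot u^\prime=0$ gives $0=\delta\sigma(v)g_L(u)+\gamma\widetilde\sigma(u)f_R(v)^\prime$, and projecting onto $V$ (resp.\ $V^\prime$) and invoking $g_L\ne0$ (resp.\ $f_R\ne0$) forces $\delta\sigma=0$ and $\gamma\widetilde\sigma=0$; symmetrically $u^\prime\cdot v=0$ forces $\gamma\sigma=0$ and $\delta\widetilde\sigma=0$, using $g_R\ne0$ and $f_L\ne0$. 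Because $(\delta,\gamma)\ne(0,0)$ we conclude $\sigma=\widetilde\sigma=0$; this is precisely where the hypotheses ``all four maps nonzero'' and ``$(\delta,\gamma)\ne(0,0)$'' get used.

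With all these components gone, $D(a)=\alpha a+v_a+w_a^\prime$ and $D$ maps each of $V$, $V^\prime$ into $V\oplus V^\prime$. Applying the identity to $a\cdot a=0$ then yields $0=(\delta f_R+\gamma f_L)(v_a)^\prime+(\delta g_R+\gamma g_L)(w_a)$, which splits into the two displayed conditions on $v_a$ and $w_a$. Finally, applying the identity to each of the four mixed products $a v=f_L(v)^\prime$, $v a=f_R(v)^\prime$, $a v^\prime=g_L(v)$, $v^\prime a=g_R(v)$ and comparing the $V$- and $V^\prime$-components produces two of the eight displayed relations apiece (the $a$-components of both sides vanishing automatically, which is a convenient consistency check). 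The only products not yet used, namely $u v=0$ and $u^\prime v^\prime=0$ for $u,v\in V$, impose nothing new once $\sigma=\widetilde\sigma=0$, since both sides of the identity then reduce to $0$; hence the listed conditions are exactly the constraints on $D$. (Running the same computation backwards shows conversely that every $D$ of the stated form is a $(\delta,\gamma)$-derivation, so the description is in fact an ``if and only if''.)

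The main obstacle is the third step: recognizing that the $Ka$-components of $D|_V$ and $D|_{V^\prime}$ are invisible to any single structure product and can only be eliminated by playing the two vanishing cross products off against each other, crucially using nonvanishing of all of $f_L,f_R,g_L,g_R$ and the hypothesis $(\delta,\gamma)\ne(0,0)$. Everything else is routine bookkeeping --- there are only about ten products to check, and each projects cleanly onto the four graded summands.
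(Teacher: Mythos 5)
Your proposal is correct and is exactly the computation the paper has in mind --- its proof of this lemma is literally the two words ``Direct calculations.'' Your step 3 (eliminating the $Ka$-components of $D|_V$ and $D|_{V'}$ via the two vanishing cross products $vu'=0$ and $u'v=0$, which is where all four nonvanishing hypotheses and $(\delta,\gamma)\ne(0,0)$ enter) is the one genuinely non-mechanical point, and you handle it correctly; all the remaining products check out against the stated relations.
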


\begin{proof}
Direct calculations.
\end{proof}

The non-vanishing conditions of Lemma \ref{lemma-der} are merely technical ones,
to avoid consideration of numerous degenerate tedious cases.

We may specialize this setup in many different ways to get an example of an 
algebra having a $(\delta,\gamma)$-derivation satisfying the condition of 
Proposition or its Corollary, and hence admitting a non-semigroup 
grading. One of the easiest ways is to set $f_L = f_R = g_L = g_R = f$, where
$f \circ f = 0$ (say, $V$ is $2$-dimensional, and $f$ has the matrix 
$\begin{pmatrix} 0 & 1 \\ 0 & 0 \end{pmatrix}$ in the canonical basis), 
$\delta = \gamma = -1$, $\alpha = \beta = 0$, $v_a = w_a = 0$, and
$\psi = \widetilde\varphi = 0$, $\varphi = \id_V$, $\widetilde\psi = -\id_V$.
Then $D$ from Lemma \ref{lemma-der} is a $(-1)$-derivation (or, 
\emph{antiderivation}) of the algebra $A(f,f,f,f)$. The eigenvalues of $D$ are 
$0, 1, -1$, with eigenspaces $A_{0} = Ke \oplus Ka$, $A_1 = V$, and 
$A_{-1} = V^\prime$. Then by heading (i) of Corollary, the root space 
decomposition $A(f,f,f,f) = A_0 \oplus A_1 \oplus A_{-1}$ is a non-semigroup 
grading. This fact can be also verified directly: as 
$A_0^2 = Ke \subset A_0$, $A_0 A_1 = A_1 A_0 = (\im f)^\prime \subset A_{-1}$, and 
$A_0 A_{-1} = A_{-1} A_0 = \im f \subset A_1$, we have the following (partial)
operation on the grading set:
$$
0*0 = 0 , \quad 0 * 1 = 1 * 0 = -1, \quad 0 * (-1) = (-1) * 0 = 1 ,
$$
what contradicts associativity: 
$$
1 = 0*(-1) = (0*0)*(-1) \ne 0*(0*(-1)) = 0*1 = -1 .
$$
(Note that this is the same non-associative grading set as in the Lie-algebraic
example in \cite{elduque-2}).

The algebra $A(f,f,f,f)$ is, obviously, commutative, with a commutative grading.
By modifying this example to make the maps $f_L$, $f_R$, $g_L$, $g_R$ different,
it is possible to get various examples of associative non-commutative algebras 
with a non-semigroup grading, commutative or not. The relevant calculations are
trivial, but somewhat cumbersome, and are left to the interested reader.

\section{Further questions}\label{sec-quest}

If $L = \bigoplus_{g \in \Gamma}  L_g$ is a Lie algebra graded by an 
\emph{abelian group} $\Gamma$, then its universal enveloping algebra $U(L)$ is a
$\Gamma$-graded associative algebra, with the graded components $U(L)_g$ 
linearly spanned by monomials of the form $x_1 \dots x_k$, where 
$x_i \in L_{g_i}$ and $\sum_{i=1}^k g_i = g$ (see, e.g., 
\cite[Theorem 4.3]{strade-farn}). 

The algebra $U(L)$ is infinite-dimensional, what, perhaps, is not that 
interesting in our context. In the positive characteristic it is possible, 
however, to define the same grading on the finite-dimensional \emph{restricted}
universal enveloping algebra of a graded restricted Lie algebra. However, the 
facts that multiplication in $\Gamma$ is defined everywhere, and is associative,
are crucial in this construction, and it is unclear how to extend or modify it 
to grading by an arbitrary set $\Gamma$.

\begin{question}
Is it possible to construct a grading of the (restricted) universal enveloping
algebra, given (arbitrary, not necessarily semigroup) grading of the underlying 
Lie algebra?
\end{question}

A positive answer to this question will produce a plethora of non-semigroup 
gradings of finite-dimensional associative algebras in positive characteristic,
different from those exhibited in \S \ref{sec-nonsemigr}: take any of the 
examples from \cite{elduque-1} or \cite{elduque-2} over a field of 
characteristic $p>0$, pass, if necessary, to the $p$-envelope, and consider the
restricted universal enveloping algebra.

\begin{question}
What is the minimal dimension of an associative algebra admitting a 
non-semigroup grading?
\end{question}

It is, probably, possible to prove, following the approach of 
\cite[Theorem in \S 1]{elduque-2}, and classification of low-dimensional 
associative algebras, that any grading of an associative algebra of dimension 
$\le 3$ is a semigroup grading. Since the underlying algebra is not necessarily
commutative, there are apriori much more possibilities for a noncommutative
partial operation on a $2$- and $3$-element grading set. The relevant 
calculations should be straightforward, but definitely cumbersome.

We also failed to find examples of non-semigroup gradings of associative 
algebras of dimension $4$ and $5$. The minimal dimension of an algebra with 
non-semigroup grading following the scheme of \S \ref{sec-nonsemigr} is $6$.

By analogy with the question about gradings of simple Lie algebras from 
\cite{elduque-1}, one may ask

\begin{question}
Is it true that any grading of a full matrix algebra is a semigroup grading?
\end{question}

Note that this question cannot be approached by constructing an appropriate 
$(\delta,\gamma)$-derivation as in \S \ref{sec-nonsemigr}: it is easy to see 
that any $(\delta,\gamma)$-derivation of a full matrix algebra is either an
(inner) derivation, or a scalar multiple of the identity map (see, e.g., 
\cite[Theorem 1]{shest} for a slightly more general statement).

Finally, note that, in principle, the same approach as in 
\S \ref{sec-nonsemigr} may be used to construct examples of non-semigroup 
gradings in varieties of algebras satisfying other identities of degree $3$ 
(like Leibniz, Zinbiel, left-symmetric, Lie-admissible, Alia algebras, etc.). 
Another interesting topic would be to explore the question from the point of view of operadic Koszul duality: 
for example, does the presence/absence of non-semigroup gradings of algebras
over a binary quadratic operad $\mathscr P$ entails the same for algebras over 
the operad Koszul dual to $\mathscr P$?

\section*{Acknowledgements}

Thanks are due to Miroslav Korbel\'a\v{r} for asking questions which prompted 
me to write this note. This work was supported by 
the Statutory City of Ostrava (grant 0924/2016/Sa\v{S}), and
the Ministry of Education and Science of the Republic of Kazakhstan 
(grant 0828/GF4).

\end{document}